\documentclass[11pt]{article}

\usepackage[margin=1in]{geometry}
\usepackage{amsmath,amssymb,amsthm,mathtools}
\usepackage{bm}
\usepackage{xcolor}
\usepackage{enumitem}
\usepackage{graphicx}
\usepackage{hyperref}

\hypersetup{
  colorlinks=true,
  linkcolor=blue,
  citecolor=blue,
  urlcolor=blue
}

\newtheorem{theorem}{Theorem}[section]
\newtheorem{proposition}[theorem]{Proposition}

\newtheorem{remark}[theorem]{Remark}

\newcommand{\E}{\mathbb{E}}

\newcommand{\R}{\mathbb{R}}

\newcommand{\braket}[1]{\left\langle #1\right\rangle}

\title{\textbf{A note on Latała's argument in SK model}}
\author{
  Seiichiro Kusuoka\thanks{Department of Mathematics and Mathematical Sciences, Graduate School of Science, Kyoto University. E-mail: \texttt{kusuoka@math.kyoto-u.ac.jp}}
  \and
  Shuta Nakajima\thanks{Department of Mathematics, Keio University. E-mail: \texttt{njima@keio.jp}}
}

\date{}

\begin{document}

\maketitle

\begin{abstract}
In this note, we consider the Sherrington--Kirkpatrick model with deterministic external field. Let $q=q(\beta,h)$ denote the solution of the replica-symmetric self-consistency equation
\[
q=\mathbb E\tanh^2\!\left(h+\beta\sqrt q\,Z\right),
\qquad Z\sim N(0,1),
\]
where $\beta$ and $h$ are inverse temperature and external field, respectively.  By refining Latała’s argument, previously limited to \(\beta<\tfrac12\), and using the Kearns--Saul inequality, we prove overlap concentration and convergence of the free energy to the replica symmetric formula with error \(O(N^{-1})\) whenever
\[
\beta^2\frac{q}{\operatorname{arctanh}q}<1.
\]
Note that for any $\beta<1$ and $h\in \mathbb R$, the condition above is satisfied. Moreover, for every nonzero $h$, this region contains a nonempty interval with $\beta>1$.
\end{abstract}

\section{Model and main result}
In the present paper, we study the SK model, one of the classical models of spin-glass theory, defined as follows.  Let 
\(
\Sigma_N=\{-1,1\}^N,
\)  
and  $(g_{ij})_{1\leq i<j\leq N}$ be independent standard normals.  Fix an external field \(h\in\mathbb R\) and an inverse temperature $\beta\geq 0.$  For $\sigma\in\Sigma_N$, define the SK Hamiltonian by 
\begin{equation}\label{eq:SK-Hamiltonian}
H_N(\sigma)
:=
\frac{\beta}{\sqrt{N}}
\sum_{1\leq i<j\leq N}g_{ij}\sigma_i\sigma_j
+
h\sum_{i=1}^N\sigma_i.
\end{equation}
Write
\[
Z_N:=\sum_{\sigma\in\Sigma_N}e^{H_N(\sigma)},
\qquad
\phi_N:=\frac1N\E\log Z_N,
\]
and denote Gibbs expectation by $\braket{\cdot}$, i.e.
\[
\braket{F} := \frac{1}{Z_{N}}\sum _{\sigma \in \Sigma _N} F(\sigma ) e^{H_{N}(\sigma)}
\]
for a function $F$ on $\Sigma _N$.
This $Z_N$ is called the partition function, hence $\phi_N=N^{-1}\mathbb E \log Z_N$ stands for the expected free energy up to multiplication of constant.

For replicas $\sigma^a,\sigma^b$ sampled independently from the Gibbs measure conditional on the disorder, define the overlap
\[
R_{ab}:=\frac1N\sum_{i=1}^N\sigma_i^a\sigma_i^b.
\]
Let $q=q(\beta,h)\in[0,1]$ be a solution of\footnote{Existence follows because the right-hand side defines a continuous map from $[0,1]$ into itself. In the parameter region considered below, the relevant solution is unique by Guerra's result; see \cite{Talagrand2}.}
\begin{equation}\label{eq:fixed-point}
q=\E[\tanh\!\left(h+\beta\sqrt q\,Z\right)^2],
\qquad Z\sim N(0,1).
\end{equation}
For finite $\beta$ and $h$, one has $q<1$. Define
\[
\kappa(q):=
\begin{cases}
\dfrac{q}{\operatorname{arctanh}q},&q\neq0,\\[1ex]
1,&q=0,
\end{cases}
\qquad
\rho(\beta,h):=\beta^2\kappa(q(\beta,h)).
\]
We define the replica-symmetric free energy by
\[
\phi^{\mathrm{RS}}(\beta,h)
:=
\log 2
+
\E\log\cosh\!\left(h+\beta\sqrt q\,Z\right)
+
\frac{\beta^2}{4}(1-q)^2.
\]

We say that replica symmetry holds if
\[
\lim_{N\to\infty}\phi_N=\phi^{\mathrm{RS}}(\beta,h).
\]

When \(h=0\), replica symmetry is known to hold throughout the region \(\beta<1\) (see \cite{AizenmanLebowitzRuelle}), whereas it fails for \(\beta>1\) (see \cite{Guerra}).   For a general external field and sufficiently small \(\beta\), replica symmetry was later proved by Talagrand in \cite{Talagrand2} using the cavity method. Latała subsequently extended the result to the regime \(\beta<1/2\) by  a simple interpolation argument inspired by Guerra's argument \cite{Guerra}.

It is therefore natural to ask whether the reduction of the range from \(\beta<1\) to \(\beta<1/2\) is intrinsic to its interpolation method. Our purpose is to show that it is not the case.  Our proof is inspired by Latała's argument and incorporates the coupled free energy introduced below as an additional ingredient.
\begin{theorem}\label{thm:main}
Let $\beta\geq0$ and $h\in \mathbb R$, and suppose that
\begin{equation}\label{eq:improved-region}
\rho(\beta,h) = \beta^2\kappa(q(\beta,h))
<1,
\end{equation}
Then, the overlap concentration holds in the sense that there exists a constant $C=C(\beta,h)<\infty$ such that
\begin{equation}\label{eq:overlap-concentration}
\E\braket{(R_{12}-q)^2}\leq \frac{C}{N}.
\end{equation}
Consequently, we have the quantitative estimate of the replica symmetric free energy $\phi^{\mathrm{RS}}(\beta,h)$:
\begin{equation}\label{eq:RS-free-energy}
0\leq \phi^{\mathrm{RS}}(\beta,h)-\phi_N\leq \frac{C}{N}.
 \end{equation}

\end{theorem}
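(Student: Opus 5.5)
We follow Latała's scheme: an interpolation in which the quantity $\E\braket{e^{\lambda N(R_{12}-q)^2}}$, or equivalently a coupled free energy, is propagated along the interpolation path. Here $\lambda>0$ is a small parameter. For $t\in[0,1]$ consider the interpolating Hamiltonian
\[
H_t(\sigma)=\sqrt{t}\,\frac{\beta}{\sqrt N}\sum_{i<j}g_{ij}\sigma_i\sigma_j+\sqrt{1-t}\,\beta\sqrt q\sum_i z_i\sigma_i+h\sum_i\sigma_i ,
\]
with $z_i$ i.i.d.\ standard normals. At $t=0$ the Gibbs measure is a product measure with $\E\braket{\sigma_i}^2=q$ by \eqref{eq:fixed-point}.

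\textbf{Coupled free energy.} Instead of Latała's single-replica exponential moment, we introduce the coupled two-replica free energy
\[
\psi_t(\lambda)=\frac1N\E\log\sum_{\sigma^1,\sigma^2}\exp\Bigl(H_t(\sigma^1)+H_t(\sigma^2)+\lambda N(R_{12}-q)^2\Bigr)-2\cdot\frac1N\E\log Z_t .
\]
Then $N\psi_t(\lambda)=\E\log\braket{e^{\lambda N(R_{12}-q)^2}}_t$. Gaussian integration by parts gives $\partial_t\psi_t$ as $\tfrac{\beta^2}{4}$ times a combination of coupled-measure expectations of $(R_{ab}-q)^2$ over replicas $a,b\in\{1,2,3,4\}$ (two coupled pairs), up to $O(1/N)$ terms from diagonal corrections. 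The positive terms, such as $\braket{(R_{12}-q)^2}_{\lambda}$ within the coupled pair, are controlled by $\partial_\lambda\psi_t$. The cross-pair terms enter with signs that allow them to be dropped or bounded by the same quantity, as in Latała's argument. This yields a differential inequality of the form
\[
\partial_t\psi_t(\lambda)\le \frac{\beta^2}{2}\,\partial_\lambda\psi_t(\lambda)+\frac{C}{N}.
\]
Following characteristics $\lambda(t)=\lambda_0+\tfrac{\beta^2}{2}(1-t)$, this reduces the bound at $t=1$ to a bound at $t=0$ with a larger $\lambda$.

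\textbf{Initial condition via Kearns--Saul.} At $t=0$ the coordinates $\sigma^1_i\sigma^2_i$ are independent given the disorder, with mean $m_i=\tanh^2(h+\beta\sqrt q z_i)$ and $\E m_i=q$. Linearize by writing $e^{\lambda N x^2}=\E_g e^{\sqrt{2\lambda N}\,g x}$. It then suffices to bound the log-Laplace transform of a $\pm1$ variable with mean $m$. This is exactly where the Kearns--Saul inequality enters: a $\{-1,1\}$ variable with mean $m$ is subgaussian about its mean with variance proxy $\frac{m}{\operatorname{arctanh} m}$. The exponent also involves the randomness of $m_i$ around $q$ through $z_i$. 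We aim to show that the total exponential moment at $t=0$ stays finite, uniformly in $N$, as long as $2\lambda\kappa<1$, with the relevant variance proxy evaluated at $q$ itself. The key point is that at the starting point of the characteristic, $\lambda(0)\approx\beta^2/2$, this condition reads exactly $\rho(\beta,h)<1$. Latała instead used the crude proxy $1$, which gives only $\beta$ small. Combining the two contributions (Gibbs fluctuation and disorder fluctuation of $m_i$) into a single proxy $\kappa(q)$ is where I expect the main obstacle. I would first try to use concavity/monotonicity of $m\mapsto m/\operatorname{arctanh}m$ together with Jensen over $z$, possibly after shifting the quadratic to be centered at $\braket{R_{12}}$ and absorbing the disorder part separately via Gaussian concentration with a small loss in $\lambda$.

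\textbf{Conclusion.} With $\lambda_0>0$ small and $\rho<1$, we obtain $\E\log\braket{e^{\lambda_0N(R_{12}-q)^2}}\le C$. Jensen's inequality in the form $x\le e^{x}$ then gives
\[
\lambda_0N\,\E\braket{(R_{12}-q)^2}\le C',
\]
which is \eqref{eq:overlap-concentration}; the same bound holds uniformly in $t$. For \eqref{eq:RS-free-energy}, Guerra's interpolation gives
\[
\frac{d}{dt}\frac1N\E\log Z_t=-\frac{\beta^2}{4}\E\braket{(R_{12}-q)^2}_t+O(1/N),
\]
and the $t=0$ endpoint equals $\phi^{\mathrm{RS}}$. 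The derivative is therefore nonpositive, which gives the lower bound $0\le\phi^{\mathrm{RS}}-\phi_N$, and integrating the uniform-in-$t$ overlap bound gives the upper bound $C/N$.
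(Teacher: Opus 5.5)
\textbf{Gap 1: the differential inequality.} Your architecture is the right one: a coupled two-replica free energy, transported along $\lambda(s)=\lambda_0+\frac{\beta^2}{2}(1-s)$, with Kearns--Saul at $t=0$. But the inequality $\partial_t\psi_t\le\frac{\beta^2}{2}\partial_\lambda\psi_t+\frac CN$ is not justified. For your interpolation the constant diagonal terms cancel exactly, and Gaussian integration by parts gives
\[
\partial_t\psi_t(\lambda)=\frac{\beta^2}{2}\,\partial_\lambda\psi_t(\lambda)-\beta^2\,\widehat\nu_{t,\lambda}\bigl[(R_{13}-q)^2\bigr]+\frac{\beta^2}{2}\,\nu_t\bigl[(R_{12}-q)^2\bigr],
\]
where $\widehat\nu_{t,\lambda}$ averages over two independent coupled pairs $(\sigma^1,\sigma^2)$, $(\sigma^3,\sigma^4)$. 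The cross-pair term is nonpositive and can be dropped, as you say. The last term, however, comes from subtracting $\frac2N\E\log Z_t$. It is the \emph{uncoupled} overlap, exactly the quantity you are trying to prove is $O(1/N)$, so calling it a $C/N$ remainder is circular.

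The missing step is to close the estimate on $\psi$ itself. The map $\lambda\mapsto\psi_t(\lambda)$ is convex and nonnegative, with $\psi_t(0)=0$ and right derivative $\nu_t[(R_{12}-q)^2]$ at $0$. Hence $\nu_t[(R_{12}-q)^2]\le\psi_t(\lambda)/\lambda$. Since $\lambda(s)\ge\lambda_0$ along the characteristic, $\frac{d}{ds}\psi_s(\lambda(s))\le\frac{\beta^2}{2\lambda_0}\psi_s(\lambda(s))$. Gronwall then transports the $t=0$ bound at the price of a factor $e^{\beta^2/(2\lambda_0)}$. This is why $\lambda_0>0$ is essential, and why the constant degenerates as $\rho\to1$.

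\textbf{Gap 2: the $t=0$ bound}, which you leave open, is easier than you expect. The paper uses Jensen, $\E\log\braket{\cdot}_0\le\log\E\braket{\cdot}_0$. Under the annealed measure $\E\braket{\cdot}_0$, the variables $\sigma^1_i\sigma^2_i$ are i.i.d.\ $\pm1$ with mean exactly $q$ by \eqref{eq:fixed-point}, so Kearns--Saul applies once with proxy $\kappa(q)$. There are no two contributions to combine. The conditional moment generating function of $\sigma_i^1\sigma_i^2-q$ is $e^{-qu}(\cosh u+\mu_i\sinh u)$, which is affine in the conditional mean $\mu_i=\tanh^2(h+\beta\sqrt q z_i)$.

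Your centering idea also works, provided you keep it \emph{inside} the logarithm. Conditional Kearns--Saul and completing the square in the Gaussian linearization give
\[
\log\braket{e^{\lambda N(R_{12}-q)^2}}_0\le-\tfrac12\log(1-2\lambda\bar\kappa_N)+\frac{\lambda S_N^2}{1-2\lambda\bar\kappa_N},
\]
with $\bar\kappa_N=\frac1N\sum_i\kappa(\mu_i)$ and $S_N=N^{-1/2}\sum_i(\mu_i-q)$. This has $O(1)$ expectation once $2\lambda\E\kappa(\mu_1)<1$: use Hoeffding for $\bar\kappa_N$ and the trivial bound $4\lambda N$ on the exponentially rare bad event. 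Concavity of $\kappa$ gives $\E\kappa(\mu_1)\le\kappa(q)$, so $\rho<1$ suffices. To see concavity, substitute $m=\tanh a$ and use $\tanh a\le a$.

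What fails is averaging the conditional bound $e^{u(\mu_i-q)+\kappa(\mu_i)u^2/2}$ over $z$ \emph{before} taking the log. Since $\kappa(\mu_i)\to1$ where $h+\beta\sqrt q z_i\approx0$, the bound becomes infinite once $2\lambda>1$, and you get nothing beyond $\beta<1$.

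\textbf{Minor: the sum rule.} The sum rule is exact:
\[
\frac{d}{dt}\frac1N\E\log Z_t=\frac{\beta^2}{4}\bigl((1-q)^2-\nu_t[(R_{12}-q)^2]\bigr).
\]
Its value at $t=0$ is $\log2+\E\log\cosh(h+\beta\sqrt qZ)$, not $\phi^{\mathrm{RS}}$. You need it in this exact form, without an $O(1/N)$ error of unknown sign, to conclude $\phi^{\mathrm{RS}}-\phi_N\ge0$.
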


\begin{remark}\label{rem:beyond-beta-one}
Since $0<\kappa(q)\leq1$, condition \eqref{eq:improved-region} contains the entire region $\beta<1$. It is strictly stronger in a nonzero external field. Indeed, if $h\neq0$, then $q(\beta,h)>0$ and hence $\kappa(q(\beta,h))<1$. Thus, $\rho(\beta,h)<1$ for all $\beta>1$ sufficiently close to $1$.
\end{remark}

The present work does not establish replica symmetry in the entire
Almeida--Thouless region,
\[
\beta^2 \mathbb{E}\!\left[\cosh\!\left(h+\beta\sqrt{q}\,Z\right)^{-4}\right]\leq 1,
\]
where replica symmetry is expected to hold in the sense that
\[
\lim_{N\to\infty}\phi_N=\phi^{\mathrm{RS}}(\beta,h).
\] 
We note that a recent preprint by Lopatto \cite{Lap26} claims to resolve this question using a dynamic-programming approach developed in \cite{Jagannath}. See also \cite{BrenneckeYau2021}, which establishes a similar, though slightly stronger, condition via the conditional second-moment method introduced in \cite{Bolthausen}.

In the rest of the paper, we give the proof of Theorem~\ref{thm:main}. We also note that we provide a formalization of the proof in Lean 4 \cite{brabra}.

\begin{figure}[h]
  \centering
  \includegraphics[width=\textwidth]{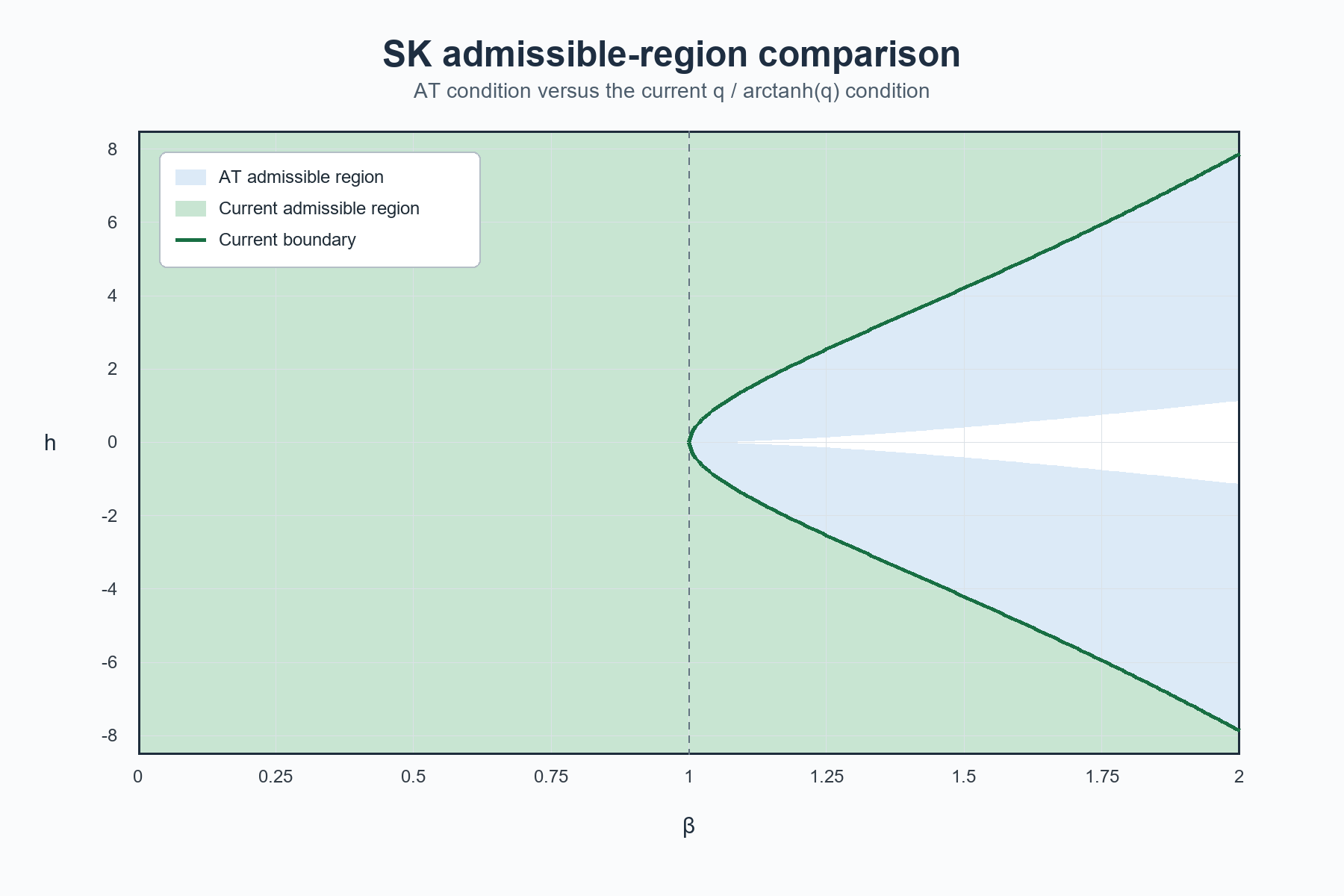}
  \caption{Comparison of the Almeida--Thouless condition 
  and the condition \eqref{eq:improved-region}.}
  \label{fig:AT-current}
\end{figure}

\section{Proof of the main theorem}

We use a smart path interpolation introduced by Guerra \cite{Guerra} (see also \cite{Talagrand2}). Let $(z_i)_{i\leq N}$ be independent standard Gaussian random variables, independent of $(g_{ij})$.  Define the smart-path Hamiltonian
\begin{equation}\label{eq:smart-path}
H_{N,t}(\sigma)
:=
\frac{\beta\sqrt t}{\sqrt N}
\sum_{i<j}g_{ij}\sigma_i\sigma_j
+
\sum_{i=1}^N
\left(h+\beta\sqrt{(1-t)q}\,z_i\right)\sigma_i,
\qquad 0\leq t\leq1.
\end{equation}
At $t=1$, this is the SK Hamiltonian \eqref{eq:SK-Hamiltonian}; at $t=0$, the sites are independent.

Let $\braket{\cdot}_t$ denote the Gibbs expectation associated with \eqref{eq:smart-path}, i.e.
\[
\braket{F}_t := \frac{1}{Z_{N,t}}\sum _{\sigma \in \Sigma _N} F(\sigma ) e^{H_{N,t}(\sigma)}
\]
where $Z_{N,t} := \sum _{\sigma \in \Sigma _N} e^{H_{N,t}(\sigma)}$, 
and set
\[
\nu_t[F]:=\E\braket{F}_t.
\]
We write
\[
\phi _N(s) := \frac{1}{N} \E \log Z_{N,s}
\]
For replicas $(\sigma^a)_a$ write
\[
Q_{ab}:=R_{ab}-q = N^{-1} \sum_{i=1}^N \sigma_i^a \sigma_i^b-q .
\]
\subsection{Endpoint estimate}
At $t=0$, conditional on $(z_i)_{i\leq N}$,
\[
m_i := \braket{\sigma_i}_0
=
\tanh\!\left(h+\beta\sqrt q\,z_i\right).
\]
For two independent replicas $\sigma^1,\sigma^2$, the variables
$X_i:=\sigma_i^1\sigma_i^2$ are independent and identically distributed under the annealed law $\mathbb E \langle \cdot\rangle_0$,
take values in $\{-1,1\}$, and satisfy
\[
\E\langle \sigma_i^1\sigma_i^2\rangle_0=\E m_i^2=\mathbb E[\tanh\!\left(h+\beta\sqrt q\,z_i\right)^2]=q,
\]
due to \eqref{eq:fixed-point}. In particular,
\[
\mathbb P(X_i=1)=\frac{1+q}{2},
\qquad
\mathbb P(X_i=-1)=\frac{1-q}{2}.
\]

We now  prove\footnote{Note that the Kearns--Saul inequality in its $\{-1,1\}$-valued form \cite{Schlemm} also gives \eqref{eq:Kearns-Saul}.}
\begin{equation}\label{eq:Kearns-Saul}
\E\braket{e^{u(X_i-q)}}_0
\leq \exp\left(\frac{\kappa(q)u^2}{2}\right),
\qquad u\in\R.
\end{equation}
 By the continuity of both sides of \eqref{eq:Kearns-Saul} in $q\in [0,1]$, it suffices to consider the case that \(0< q< 1\). Letting $a:= {\operatorname{arctanh}q}$, we obtain
\begin{align*}
\E \braket{e^{u(X_i-q)}}_0
&=e^{-qu} \left( \frac{1+q}{2} e^u + \frac{1-q}{2} e^{-u} \right) \\
&=e^{-qu} \bigl(\cosh u +q \sinh u\bigr) \\
&=e^{-qu} \frac{\cosh(u+a)}{\cosh a}.
\end{align*}
Let $x=u+a$ and define \(F(t)=\log\cosh(\sqrt t)\) for \(t\geq0\). For \(t>0\), by writing \(r=\sqrt t\) we have,
\[
F'(t)=\frac{\tanh r}{2r},\qquad
F''(t)=\frac{r\operatorname{sech}^2r-\tanh r}{4r^3}.
\]
Let
\[
h(r)=\tanh r-r\operatorname{sech}^2r.
\]
Then \(h(0)=0\) and
\[
h'(r)=2r\operatorname{sech}^2r\,\tanh r\geq0,
\]
so \(h(r)\geq0\), and hence \(F''(t)\leq0\). Thus \(F\) is concave, and
\[
F(x^2)\leq F(a^2)+F'(a^2)(x^2-a^2).
\]
Using
\[
F(x^2)=\log\cosh x,\qquad
F(a^2)=\log\cosh a,\qquad
F'(a^2)=\frac{\tanh a}{2a},
\]
and $q(x-a)+\frac{q}{2a}(x-a)^2
=\frac{q}{2a}(x^2-a^2)$, we obtain  
\[
{\;
\log\cosh x
\leq
\log\cosh a+q(x-a)+\frac{q}{2a}(x-a)^2
\; }.
\]
This together with \(x=u+a\) gives
\[
e^{-qu}\frac{\cosh(u+a)}{\cosh a}
\leq
\exp\left(\frac{q}{2a}(x-a)^2\right)
=
\exp\left(\frac{q}{2a}u^2\right)
=
\exp\left(\frac{\kappa(q)u^2}{2}\right).
\]
Therefore, we obtain \eqref{eq:Kearns-Saul}.

Since $H_{N,0} (\sigma ) = \sum _{i=1}^N \left( h+ \beta \sqrt{q} z_i \right) \sigma _i$, $\{ \sigma _i\}$ are independent with respect to the Gibbs measure at $t=0$.
This fact and \eqref{eq:Kearns-Saul} imply
\begin{equation}\label{eq:subgaussian-endpoint}
\E\braket{
\exp\left(
\frac{u}{\sqrt N}\sum_{i=1}^N(\sigma_i^1\sigma_i^2-q)
\right)
}_0
\leq \exp\left(\frac{\kappa(q)u^2}{2}\right).
\end{equation}
Integrating both sides of \eqref{eq:subgaussian-endpoint} by $\exp \left( -\frac{u^2}{2\Lambda}\right) du$ on ${\mathbb R}$ gives, whenever $\kappa(q)\Lambda<1$,
\begin{equation}\label{eq:endpoint-quadratic}
\E\braket{\exp\left(\frac{\Lambda N}{2} Q_{12}^2\right)}_0
\leq \frac{1}{\sqrt{1-\kappa(q)\Lambda}}.
\end{equation}

\subsection{Uniform quadratic-coupling estimate}\label{sec:quadratic}

We now give a key estimate.

\begin{proposition}\label{prop:quadratic}
Suppose that $\rho=\rho(\beta,h)<1$, and set
\begin{equation}\label{eq:lambda-star}
\lambda_*:=\frac{\kappa(q)^{-1}-\beta^2}{4}
=\frac{1-\rho}{4\kappa(q)}.
\end{equation}
For every $N\geq1$ and $t\in[0,1]$,
\begin{equation}\label{eq:quadratic-estimate}
\mathbb E\left[\log \left
\langle
\exp\left(\lambda_* N(R_{12}-q)^2\right)\right\rangle_t \right]
\leq \frac12
\exp\left(\frac{2\rho}{1-\rho}\right)
\log\left(\frac{2}{1-\rho}\right).
\end{equation}
\end{proposition}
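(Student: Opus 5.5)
The plan is a Latała-type argument along the smart path, comparing $t$-derivatives with $\lambda$-derivatives and using the endpoint estimate \eqref{eq:endpoint-quadratic} at $t=0$. For $\lambda\geq0$ and $t\in[0,1]$ set
\[
\Psi(t,\lambda):=\E\Bigl[\log\bigl\langle \exp\bigl(\lambda N Q_{12}^2\bigr)\bigr\rangle_t\Bigr].
\]
This is $N$ times the difference between the coupled two-replica free energy (two copies of $H_{N,t}$ coupled by $\lambda NQ_{12}^2$) and $2\phi_N(t)$. Let $G_{t,\lambda}$ be the coupled Gibbs measure on pairs $(\sigma^1,\sigma^2)$, with density proportional to $\exp(H_{N,t}(\sigma^1)+H_{N,t}(\sigma^2)+\lambda NQ_{12}^2)$. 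Then
\[
\partial_\lambda\Psi=\E\, G_{t,\lambda}\bigl(NQ_{12}^2\bigr)\geq0,
\]
and $\Psi$ is convex in $\lambda$.

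\emph{Step 1: the $t$-derivative.} Differentiate $\Psi$ in $t$ by Gaussian integration by parts, for both the $g_{ij}$ and the $z_i$. The covariance of $H_{N,t}$ gives the usual terms $\frac{\beta^2N}{4}\bigl(R_{ab}^2-q^2\bigr)$ minus the $q$-term from the $z_i$. These combine into expressions $\frac{\beta^2N}{4}(R_{ab}-q)^2$, up to $O(1)$ diagonal corrections, taken under $G_{t,\lambda}$ and its replicas. The $2\phi_N'(t)$ part contributes the same kind of terms under $\nu_t$ with the opposite sign.

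Discarding the negative terms, I aim for a differential inequality of the form
\[
\partial_t\Psi(t,\lambda)\leq \frac{\beta^2}{2}\,\partial_\lambda\Psi(t,\lambda)+\frac{\beta^2}{2}\,E(t,\lambda).
\]
Here $E(t,\lambda)$ is a cross term, for instance $\E\,G^{\otimes 2}_{t,\lambda}(NQ_{13}^2)$, or the uncoupled quantity $\nu_t(NQ_{12}^2)$. Since $\Psi(t,0)=0$ and $\Psi$ is convex in $\lambda$, every such quantity should be controlled by $\partial_\lambda\Psi$ or by $\Psi(t,\lambda)/\lambda$. I will use $\nu_t(NQ_{12}^2)\le \Psi(t,\lambda)/\lambda$, which follows from Jensen's inequality applied to $\log\langle\cdot\rangle_t$. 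The constant $\tfrac{\beta^2}{2}$ is chosen to match the shift $\lambda_*+\tfrac{\beta^2}{2}=\tfrac{1+\rho}{4\kappa(q)}$ that appears in the target bound.

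\emph{Step 2: characteristics and Gronwall.} Move along the characteristic $\lambda(t)=\lambda_*+\frac{\beta^2}{2}(1-t)$ and set $f(t):=\Psi(t,\lambda(t))$. Because $\lambda(t)\geq\lambda_*$ on this line, Step 1 should give
\[
f'(t)\leq \frac{\beta^2}{2\lambda_*}\,f(t)=\frac{2\rho}{1-\rho}\,f(t).
\]
Gronwall's lemma then yields $\Psi(1,\lambda_*)\leq e^{2\rho/(1-\rho)}\,\Psi\bigl(0,\lambda_*+\tfrac{\beta^2}{2}\bigr)$. For $t<1$ I use the same line started from time $t$; the resulting bound is no larger, since the $\lambda$-shift is smaller and $\Psi(0,\cdot)$ is monotone.

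\emph{Step 3: endpoint.} At $t=0$, apply Jensen's inequality ($\E\log\le\log\E$) and then \eqref{eq:endpoint-quadratic} with $\Lambda=2\lambda_*+\beta^2=\frac{1+\rho}{2\kappa(q)}$. This choice gives $\kappa(q)\Lambda=\frac{1+\rho}{2}<1$, so
\[
\Psi\bigl(0,\lambda_*+\tfrac{\beta^2}{2}\bigr)\leq \frac12\log\frac{2}{1-\rho}.
\]
Combining this with Step 2 gives \eqref{eq:quadratic-estimate}.

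The main obstacle is Step 1: getting the exact coefficient $\beta^2/2$ in front of $\partial_\lambda\Psi$ with no worse constant, while bounding every cross-replica term by $\Psi/\lambda_*$ only. This requires a careful sign bookkeeping of the integration-by-parts terms. I would first try to keep the identity exact, then use Cauchy--Schwarz, $(Q_{13})^2\le$ averages of $Q_{12}^2$-type terms under symmetry of the replicas, before passing to inequalities.
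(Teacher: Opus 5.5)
Your proposal is correct and is essentially the paper's proof: your $\Psi(t,\lambda)$ equals $2N F_N(t,2\lambda)$ for the paper's coupled free energy, and your characteristic $\lambda(s)=\lambda_*+\frac{\beta^2}{2}(t-s)$ is the paper's $\Lambda_s/2$. The Jensen/convexity bound, the Gronwall step and the endpoint bound via \eqref{eq:endpoint-quadratic} are the same as in the paper. The Step~1 computation you flag as the obstacle comes out exactly as $\partial_t\Psi=\frac{\beta^2}{2}\partial_\lambda\Psi+\frac{\beta^2}{2}\nu_t(NQ_{12}^2)-\beta^2\,\E\,G^{\otimes 2}_{t,\lambda}(NQ_{13}^2)$, where the $-\beta^2/2$ diagonal constants cancel exactly; so the cross term has a favorable sign and is simply dropped, and no Cauchy--Schwarz is needed.
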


\begin{proof}
Fix $t\in[0,1]$. For $\Lambda\geq0$, introduce the  coupled free energy
\[
\phi_N(s,\Lambda)
:=
\frac{1}{2N}
\E\left[\log \left(
\sum_{\sigma^1,\sigma^2\in\Sigma_N}
\exp\left\{
{H}_{N,s}(\sigma^1)
+
{H}_{N,s}(\sigma^2)
+
\frac{\Lambda N}{2}Q_{12}^2
\right\}\right)\right].
\]
Clearly,
\(
\phi_N(s,0)=\phi_N(s).
\)  Let $C_s(\sigma,\tau)$ be the covariance of ${H}_{N,s}(\sigma)$ and ${H}_{N,s}(\tau)$. Then, we have
\[
C_s(\sigma,\tau)
=\beta^2\left(
\frac{s}{N}\sum_{i<j}\sigma_i\tau_i\sigma_j\tau_j
+
(1-s)q\sum_{i=1}^N\sigma_i\tau_i\right).
\]
For $\sigma,\tau\in\Sigma_N$, write
\[
R(\sigma,\tau):=\frac1N\sum_{i=1}^N\sigma_i\tau_i.
\]
Using
\begin{equation}\label{eq:R^2}
\sum_{i<j}\sigma_i\tau_i\sigma_j\tau_j
=
\frac12\left(N^2R(\sigma,\tau)^2-N\right),
\end{equation}
we obtain
\begin{equation}\label{eq:covariance-derivative}
\dot{C}_s(\sigma,\tau):=\frac{\rm d}{{\rm d}s} C_s(\sigma,\tau)
=
\frac{\beta^2 N}2\left((R(\sigma,\tau)-q)^2-q^2\right)-\frac{\beta^2}{2}.
\end{equation}

Since
\[
\frac{\rm d}{{\rm d}s} \phi_N(s) = \frac{\beta }{2N} \E\left[\frac{1}{Z_{N,s}} \sum _{\sigma \in \Sigma _N} e^{H_{N,s}(\sigma )} \left( \frac{1}{\sqrt{sN}}
\sum_{i<j}g_{ij}\sigma_i\sigma_j
- \sqrt{\frac{q}{1-s}}
\sum_{i=1}^N
z_i \sigma_i \right) \right],
\]
by Gaussian integration by parts (see \cite[Vol.~1,~Sec.~A.4]{Talagrand2}), we have 
\begin{align*}
\frac{\rm d}{{\rm d}s} \phi_N(s) &= \frac{\beta }{2N} \sum _{\sigma \in \Sigma _N} \sum_{i<j} \sigma _i \sigma _j \E\left[ \frac{e^{H_{N,s}(\sigma )}}{Z_{N,s}} \frac{\beta}{N} \sigma _i \sigma _j -\frac{e^{H_{N,s}(\sigma )}}{Z_{N,s}^2}  \frac{\beta}{N} \sum _{\tau \in \Sigma _N} \tau _i \tau _j e^{H_{N,s}(\tau )}\right] \\
&\quad - \frac{\beta }{2N} \sum _{\sigma \in \Sigma _N} \sum_{i=1}^N \sigma _i \E\left[ \frac{e^{H_{N,s}(\sigma )}}{Z_{N,s}} \beta q \sigma _i -\frac{e^{H_{N,s}(\sigma )}}{Z_{N,s}^2} \beta q \sum _{\tau \in \Sigma _N} \tau _i e^{H_{N,s}(\tau )}\right] \\
&= \frac{\beta ^2 (N-1)}{4N} -\frac{\beta ^2}{2} \sum _{\sigma \in \Sigma _N} \E\left[ \sum _{\sigma , \tau \in \Sigma _N} \frac{e^{H_{N,s}(\sigma )}}{Z_{N,s}} \frac{e^{H_{N,s}(\tau )}}{Z_{N,s}} \cdot \frac{1}{N^2} \sum_{i<j} \sigma _i \sigma _j \tau _i \tau _j \right] \\
&\quad - \frac{\beta ^2 q}{2} + \frac{\beta ^2 q}{2} \E\left[ \sum _{\sigma , \tau \in \Sigma _N} \frac{e^{H_{N,s}(\sigma )}}{Z_{N,s}} \frac{e^{H_{N,s}(\tau )}}{Z_{N,s}} \cdot \frac{1}{N}\sum _{i=1}^N \sigma _i \tau _i \right] .
\end{align*}
Hence, in view of \eqref{eq:R^2} it holds that
\begin{equation}\label{eq:p-derivative}
\frac{\rm d}{{\rm d}s} \phi_N(s)
=\frac{\beta^2}{4}\left(
(1-q)^2
-
\nu_s[Q_{12}^2]\right).
\end{equation}

Let $\widehat\nu_{s,\Lambda}$ denote expectation with respect to two independent pairs
$(\sigma^1,\sigma^2)$ and $(\sigma^3,\sigma^4)$ sampled from the coupled Gibbs measure defining $\phi_N(s,\Lambda)$, i.e. 
\[
\widehat\nu_{s,\Lambda}[F] := \E\left[\frac{ \sum_{\sigma^1,\sigma^2,\sigma^3,\sigma^4\in\Sigma_N} F(\sigma ^1, \sigma ^2, \sigma ^3,\sigma ^4) 
e^{H_{N,s}^\Lambda (\sigma ^1, \sigma ^2)} 
e^{H_{N,s}^\Lambda (\sigma ^3, \sigma ^4)}
}{\left(
\sum_{\sigma^1,\sigma^2\in\Sigma_N}
e^{H_{N,s}^\Lambda (\sigma ^1, \sigma ^2)} \right) ^2 }\right]
\]
where $H_{N,s}^\Lambda (\sigma ^1, \sigma ^2) := {H}_{N,s}(\sigma^1) + {H}_{N,s}(\sigma^2) + \frac{\Lambda N}{2}Q_{12}^2$.
Denote $\sum_{\sigma^1,\sigma^2\in\Sigma_N} e^{H_{N,s}^\Lambda (\sigma ^1, \sigma ^2)}$ by $Z_{N,s}^\Lambda$.
Similarly to \eqref{eq:p-derivative} we are able to calculate $\frac{\partial}{\partial s} \phi_N(s,\Lambda)$ as
\begin{align*}
&\frac{\partial}{\partial s} \phi_N(s,\Lambda) \\
&= \frac{\beta }{4N} \sum _{\sigma ^1 ,\sigma ^2 \in \Sigma _N} \sum_{i<j} (\sigma _i^1 \sigma _j^1 + \sigma _i^2 \sigma _j^2 ) \E\left[ \frac{e^{H_{N,s}^\Lambda (\sigma ^1, \sigma ^2)} }{Z_{N,s}^\Lambda } \frac{\beta}{N} (\sigma _i^1 \sigma _j^1 + \sigma _i^2 \sigma _j^2 ) \right. \\
&\quad \hspace{6cm} \left. -\frac{e^{H_{N,s}^\Lambda (\sigma ^1, \sigma ^2)}}{(Z_{N,s}^\Lambda )^2}  \frac{\beta}{N} \sum _{\tau ^1, \tau ^2 \in \Sigma _N} (\tau _i^1 \tau _j^1 +\tau _i^2 \tau _j^2 ) e^{H_{N,s}^\Lambda (\tau ^1, \tau ^2)}\right] \\
&\quad - \frac{\beta }{4N} \sum _{\sigma ^1 ,\sigma ^2 \in \Sigma _N} \sum_{i=1}^N (\sigma _i^1 + \sigma _i^2 ) \E\left[ \frac{e^{H_{N,s}^\Lambda (\sigma ^1, \sigma ^2)} }{Z_{N,s}^\Lambda } \beta q (\sigma _i^1 + \sigma _i^2 ) \right. \\
&\quad \hspace{6cm} \left.  -\frac{e^{H_{N,s}^\Lambda (\sigma ^1, \sigma ^2)}}{(Z_{N,s}^\Lambda )^2} \beta q \sum _{\tau ^1, \tau ^2 \in \Sigma _N} (\tau _i^1 + \tau _i^2) e^{H_{N,s}^\Lambda (\tau ^1, \tau ^2)} \right] \\
&=
\frac{1}{4N}
\widehat\nu_{s,\Lambda}
\left[
2\dot C_s(\sigma^1,\sigma^1)
+
2\dot C_s(\sigma^1,\sigma^2)
-
\sum_{{a\in\{1,2\},b\in\{3,4\}}}
\dot C_s(\sigma^a,\sigma^b)
\right].  
\end{align*}
By symmetry of the coupled Gibbs measure under exchange within each replica pair, the four cross terms have the same expectation.  Substitution of \eqref{eq:covariance-derivative} gives
\begin{equation}\label{eq:coupled-s-derivative}
\frac{\partial}{\partial s} \phi_N(s,\Lambda)
=
\frac{\beta^2}{4}\left((1-q)^2
+
\widehat\nu_{s,\Lambda}[Q_{12}^2]
-
2\widehat\nu_{s,\Lambda}[Q_{13}^2]\right).
\end{equation}
Moreover,
\begin{equation}\label{eq:Lambda-derivative}
\partial_{\Lambda} \phi_N(s,\Lambda)
=
\frac14\widehat\nu_{s,\Lambda}[Q_{12}^2].
\end{equation}

Let $\lambda_*$ be as in \eqref{eq:lambda-star}. For $0\leq s\leq t$, set
\[
\Lambda_s:=2\lambda_*+\beta^2(t-s),
\qquad
F_N(s,\Lambda ):=\phi_N(s,\Lambda )-\phi_N(s),
\qquad
D_N(s) := F_N(s,\Lambda _s).
\]
Since $\frac{\rm d}{{\rm d}s} \Lambda_s = -\beta^2$, \eqref{eq:p-derivative}, \eqref{eq:coupled-s-derivative}, and \eqref{eq:Lambda-derivative} imply
\begin{equation}\label{eq:D'}
\frac{\rm d}{{\rm d}s} D_N(s)
=\beta^2\left(
-\frac12\widehat\nu_{s,\Lambda_s}[Q_{13}^2]
+
\frac14\nu_s[Q_{12}^2]\right)
\leq
\frac{\beta^2}{4}\nu_s[Q_{12}^2].
\end{equation}
On the other hand, 
\[
F_N(s,\Lambda ) = 
\frac{1}{2N}\mathbb E\log
\left\langle\exp\left(\frac{\Lambda N}{2}Q_{12}^2\right)\right\rangle_s.
\]
Hence, for each $s\in [0,t]$ the function $\Lambda \mapsto F_N(s,\Lambda )$ on $[0,\infty )$ is nonnegative and convex because its second derivative is a nonnegative tilted variance. Moreover, $F_N(s,0)=0$ and
\[
\left( \frac{\partial}{\partial \Lambda} F_N \right) (s,0+)=\frac14\nu_s[Q_{12}^2].
\]
Therefore, by convexity,
\[
\frac14\nu_s[Q_{12}^2]
\leq
\frac{F_N(s,\Lambda_s)}{\Lambda_s} = \frac{D_N(s)}{\Lambda_s}.
\]
Since $\Lambda_s\geq2\lambda_*$, from these inequalities and \eqref{eq:D'} we conclude that
\[
\frac{\rm d}{{\rm d}s} D_N(s) \leq \frac{\beta^2 D_N(s)}{2\lambda_*}.
\]
Hence, Gronwall's inequality yields
\begin{equation}\label{eq:Gronwall}
D_N(t)
\leq
\exp\left(\frac{\beta^2 t}{2\lambda_*}\right)D_N(0).
\end{equation}

It remains to estimate $D_N(0)$. At $s=0$, \eqref{eq:endpoint-quadratic} implies that, whenever $\kappa(q)\Lambda<1$,
\[
\E\braket{\exp\left(\frac{\Lambda N}{2}Q_{12}^2\right)}_0
\leq \frac{1}{\sqrt{1-\kappa(q)\Lambda}}.
\]
By applying \eqref{eq:lambda-star} and recalling $\rho = \beta ^2 \kappa (q)$, for $0\leq t\leq1$, we have
\[
1-\kappa(q)(2\lambda_*+\beta^2t)
\geq \frac{1-\rho}{2}>0.
\]
Since this inequality and Jensen's inequality give
\[
\E\log\braket{
\exp\left(
\frac{(2\lambda_*+\beta^2t)N}{2}Q_{12}^2
\right)
}_0 \leq
\frac12
\log\left(\frac{1}{1-\kappa(q)(2\lambda_*+\beta^2t)}\right)
\leq
\frac12\log\left(\frac{2}{1-\rho}\right), 
\]
it holds that
\begin{equation}\label{eq:D0}
2N D_N(0) \leq \frac12\log\left(\frac{2}{1-\rho}\right)
\end{equation}

At $s=t$, we have $\Lambda_{t}=2\lambda_*$. Hence
\(
2N D_N(t)
=
\E\log\braket{
e^{\lambda_*NQ_{12}^2}
}_t.
\) 
Combining this with \eqref{eq:Gronwall} and \eqref{eq:D0} and using
\[
\frac{\beta^2}{2\lambda_*}=\frac{2\rho}{1-\rho}
\]
yields
\[
\E\log\braket{
e^{\lambda_*NQ_{12}^2}
}_t
\leq
\frac12
\exp\left(\frac{\beta^2t}{2\lambda_*}\right)
\log\left(\frac{2}{1-\rho}\right)
\leq \frac12
\exp\left(\frac{2\rho}{1-\rho}\right)
\log\left(\frac{2}{1-\rho}\right),
\]
which proves \eqref{eq:quadratic-estimate}. 
\end{proof}

\subsection{Overlap concentration: Proof of \eqref{eq:overlap-concentration}}
We give the proof of \eqref{eq:overlap-concentration}.
Let $\lambda_*$ be as in \eqref{eq:lambda-star}. For $s\geq0$, define
\[
F_{N,t}(s)
:=
\frac1N\E\log\braket{e^{sNQ_{12}^2}}_t.
\]
The map $s\mapsto F_{N,t}(s)$ is convex, $F_{N,t}(0)=0$, and
\(
F_{N,t}'(0+)=\nu_t[Q_{12}^2].
\) 
Set
\[
C_0:=\frac12
\exp\left(\frac{2\rho}{1-\rho}\right)
\log\left(\frac{2}{1-\rho}\right).
\]
By the convexity of $F_{N,t}$ and Proposition~\ref{prop:quadratic}, we have
\begin{equation}\label{eq: overlap concentratino with t}
    \nu_t[Q_{12}^2]
\leq
\frac{F_{N,t}(\lambda_*)-F_{N,t}(0)}{\lambda_*}
\leq
\frac{C_0}{\lambda_* N}.
\end{equation}
Letting $t=1$ in this inequality yields
\[
\E\braket{(R_{12}-q)^2} = \nu_1[Q_{12}^2]
\leq
\frac{C_0}{\lambda_*N}.
\]

\subsection{Replica symmetric free energy: Proof of \eqref{eq:RS-free-energy}}
The proof of \eqref{eq:RS-free-energy} is similar to Latała's argument. 
Recall that
\(
\phi_N(t)=\frac1N\E\log Z_{N,t},
\) 
where $Z_{N,t}$ is the partition function associated with \eqref{eq:smart-path} and that
\begin{equation}\label{eq:sum-rule}
\frac{\rm d}{{\rm d}t} \phi_N(t)
=
\frac{\beta^2}{4}(1-q)^2
-
\frac{\beta^2}{4}\nu_t\left[(R_{12}-q)^2\right]
\end{equation}
(see \eqref{eq:p-derivative}).
Since $H_{N,0} (\sigma ) = \sum _{i=1}^N \left( h+ \beta \sqrt{q} z_i \right) \sigma _i$, $\{ \sigma _i\}$ are independent with respect to the Gibbs measure at $t=0$.
Hence, it holds that
\begin{equation}\label{eq:initial-pressure}
\phi_N(0)
=
\log 2
+
\E\log\cosh\!\left(h+\beta\sqrt q\,Z\right)
\end{equation}
where $Z$ is a random variable with the standard normal law.
Since $\phi_N(1)=\phi_N$, integrating both sides of \eqref{eq:sum-rule} gives
\begin{align*}
    \phi_N
&=
\log 2
+
\E\log\cosh\!\left(h+\beta\sqrt q\,Z\right)
+
\frac{\beta^2}{4}(1-q)^2
-
\frac{\beta^2}{4}\int_0^1\nu_t[Q_{12}^2]\,dt\\
&=
\phi^{\mathrm{RS}}(\beta,h)
-
\frac{\beta^2}{4}\int_0^1\nu_t[Q_{12}^2]\,dt.
\end{align*}
This equality and \eqref{eq: overlap concentratino with t} yield
\[
0\leq \phi^{\mathrm{RS}}(\beta,h)-\phi_N
\leq \frac{\beta^2C_0}{4\lambda_*N}.
\]
\section*{Acknowledgements}
S.K.\ acknowledges support from JSPS KAKENHI Grant Numbers
JP22H00099 and JP23K20801.
S.N.\ acknowledges support from JSPS KAKENHI Grant Numbers
JP24K16937 and JP25K00911.

\section*{Statement on the Use of Artificial Intelligence}

For the generalization of Lata{\l}a's argument, the coupled free energy was suggested by the authors. ChatGPT 5.5 Plus was used to assist with extending the calculations, and Codex was used to perform sanity checks. The Lean~4 formalization was developed with substantial assistance from Codex and Aristotle. The authors take full responsibility for the content of this work.

\end{document}